\definecolor{purple}{rgb}{0.5,0,1}
\definecolor{keywords}{RGB}{255,0,90}
\definecolor{comments}{RGB}{0,0,113}
\definecolor{red}{RGB}{160,0,0}
\definecolor{green}{RGB}{0,150,0}
\lstdefinelanguage{Magma}%
  {%
   otherkeywords={:=,+:=,-:=,*:=},%
   procnamekeys={function,func,intrinsic,procedure,proc},%
   morekeywords={true,false},%
   morekeywords=[2]{adj,and,cat,cmpeq,cmpne,diff,div,eq,ge,gt,in,is,join,le,lt,%
          meet,mod,ne,notadj,notin,notsubset,or,sdiff,subset,xor},%
   morekeywords=[3]{assigned,break,by,case,catch,continue,declare,default,%
          delete,do,elif,else,end,eval,exists,exit,for,forall,fprintf,if,local,%
          not,print,printf,quit,random,read,readi,repeat,restore,save,select,%
          then,time,to,try,until,vprint,vprintf,vtime,when,where,while},%
   morekeywords=[4]{clear,forward,freeze,iload,import,load},%
   morekeywords=[5]{assert,assert2,assert3,error,require,requirege,requirerange},%
   morekeywords=[6]{car,comp,cop,elt,ext,frac,hom,ideal,iso,lideal,loc,map,%
          ncl,pmap,quo,rec,recformat,rep,rideal,sub},%
   morekeywords=[7]{AbelianGroup,AdditiveCode,AffineAlgebra,Algebra,%
          AssociativeAlgebra,Character,CliffordAlgebra,Design,Digraph,%
          ExtensionField,FPAlgebra,FiniteAffinePlane,FiniteProjectivePlane,%
          Graph,Group,GroupAlgebra,IncidenceStructure,LieAlgebra,LinearCode,%
          LinearSpace,MatrixAlgebra,MatrixGroup,MatrixRing,Monoid,%
          MultiDigraph,MultiGraph,NearLinearSpace,Network,PartialMap,%
          PermutationGroup,PolycyclicGroup,QuaternionAlgebra,Semigroup,%
          ZModule},%
   morekeywords={[8]function,func,intrinsic,procedure,proc,return},%
      sensitive,%
      morecomment=[l]//,%
      morecomment=[s]{/*}{*/},%
      morecomment=[s]{\{}{\}},%
      morestring=[b]"%
  }[keywords,procnames,comments,strings]%
\tikzset{node distance=2cm, auto}
\DeclareMathOperator{\Aut}{Aut}
\DeclareMathOperator{\Hom}{Hom}
\DeclareMathOperator{\Jac}{Jac}
\DeclareMathOperator{\Alt}{Alt}
\newcommand{\C}{\mathbb{C}}
\newcommand{\Z}{\mathbb{Z}}
\newcommand{\Q}{\mathbb{Q}}
\newcommand{\n}{\newline}
\newcommand{\mc}{\mathcal}
\newcommand{\bb}{\mathbb}
\renewcommand{\P}{\mathbb{P}}
\definecolor{codegray}{gray}{0.9}
\newtheorem{theorem}{Theorem}
\newtheorem*{thm*}{Theorem}
\newtheorem*{proposition}{Proposition}
\newtheorem*{lemma*}{Lemma}
\newtheorem*{qlemma*}{``Lemma"}
\newtheorem*{conjecture*}{Conjecture}
\newtheorem*{question}{Question}
\theoremstyle{definition}
\newtheorem{defn}{Definition}
\theoremstyle{remark}
\newtheorem*{remark}{Remark}
\newtheorem*{answer}{Answer}
\newcommand{\DD}{\Delta\kern -8.3pt {\diamond} \kern -4.5pt \cdot \:}
\title[Automorphisms of Abelian Varieties and Principal Polarizations]{Automorphisms of Abelian Varieties \\ and Principal Polarizations}
\author{Dami Lee}  
\address{University of Washington, Seattle, WA 98195}
   \email{damilee@uw.edu}  
   \author{Catherine Ray}
   \address{Northwestern University, Evanston, IL 60208}
   \email{cray@math.northwestern.edu}
\begin{document}

\maketitle

\begin{abstract} 
The Narasimhan-Nori conjecture asks for a closed formula for the number of non-isomorphic principal polarizations of any given abelian variety. In this paper, we introduce a new algorithm that gives a lower bound on the number of non-isomorphic principal polarizations on any given abelian variety. We show, for example, that the Jacobian of the genus four underlying curve of Schoen's I-WP minimal surface has at least 9 non-isomorphic principal polarizations. We also explore the Jacobians of Klein's quartic, Fermat's quartic, Bring's curve, and more.   
\end{abstract}

\tableofcontents

\section{Introduction}

Polarizations of abelian varieties were introduced by Weil as an analogue of orientations of manifolds. They are embeddings of abelian varieties into projective space. There is a special kind of polarization called a principal polarization -- all polarizations are induced by these up to isogeny. The fact that an abelian variety admits only a finite number of isomorphism classes of principal polarizations was established by Narasimhan-Nori. In \cite{nn}, they pose the problem of finding a closed formula for the number of principal polarizations of any given abelian variety over any field, which is still unsolved. 

There has been a lot of prior work on the Narasimhan-Nori conjecture, exposited in Section ~\ref{sec:prior}. In this paper, we introduce an entirely different method of finding different principal polarizations on Jacobians, exposited in Section ~\ref{sec:find}, which treats both simple and non-simple cases in characteristic 0. We use this to give lower bounds on the number of non-isomorphic principal polarizations. Our method works as follows. For curves that are cyclic covers over $\C\bb{P}^1$, we can compute exact period matrices, exposited in Section ~\ref{sec:cyclicperiod}. Given any period matrix, we introduce new code to compute principal polarizations on the corresponding abelian variety. Then, we implement a modification of the psuedocode of Bruin-Jeroen-Sijsling \cite{numerical} to compute, for each found principal polarization, the automorphism group of the given Jacobian which fixes that polarization. If the automorphism groups are different, then the principal polarizations are non-isomorphic. This gives us a lower bound on the number of different principal polarizations for a given Jacobian. We denote by $\pi(X)$ the number of non-isomorphic principal polarizations on a variety $X.$

For example, our code yields the following result. 

\begin{theorem} \label{IWP} Let $\pi(X)$ denote the number of non-isomorphic principal polarizations on any given variety $X$. Let I-WP denote the underlying genus four curve of Schoen's I-WP minimal surface, and $\Jac(\text{I-WP})$ its Jacobian variety, then $$\pi(\Jac(\text{I-WP})) \geq 9.$$\end{theorem}

This is a remarkable result, especially since the variety $\Jac(\text{I-WP})$ itself factors into a product of 4 elliptic curves\footnote{This is because $\text{End}(\Jac(\text{I-WP})) \simeq M_4(K)$, where $K$ is imaginary quadratic. Thus, $\Jac(\text{I-WP})$ is the product of elliptic curves with CM by $K$.}, so the remaining principal polarizations must come from interesting new cycles in the product of these elliptic curves. Other results found by applying our technique include $\pi(A)$ and $|\Aut(A)|$ of Klein's quartic, Fermat's quartic, and Bring's curve (Table ~\ref{table:tablelabel}, Section ~\ref{sec:examples}).

We exposit cyclic covers of $\C\P^1$ that allow us to compute exact period matrices for simplicity of exposition, our method also works for nonexact period matrices.

In Section ~\ref{sec:questions}, we also answer the following questions on Jacobians with multiple principal polarizations by providing counterexamples.

\subsection{Prior Work}
\label{sec:prior}
Previous work toward the Narasimhan-Nori conjecture can be divided into the simple and non-simple cases. 
It can further be divided into characteristic 0 and characteristic $p$ cases. Our main goal is to understand cases of simple varieties in characteristic 0. 

In Theorem 1.5 \cite{several}, Lange establishes for simple varieties that $|\Aut(A)| = \pi(A)$ with certain restriction conditions and equivalence relations. One could in principle compute $|\Aut(A)|$ by hand, however, it is computationally infeasible. 


In Theorem 3.1 \cite{several}, Lange further establishes bounds on $\pi(A)$ in terms of the class group of $\text{End}_{\Q}(A)$, if $\text{End}_{\Q}(A)$ is a totally real number field $K$ (and thus the variety $A$ is simple). More recently, Lange treated the non-simple case of products of elliptic curves without complex multiplication in Theorem 3.5 \cite{newlange}. He does so by giving an interpretation of the number of principal polarizations in terms of class numbers of definite Hermitian forms.


Given an abelian variety $A$ and a curve $C$, an isomorphism $A \simeq \Jac(C)$ induces a polarization on $A$ called the canonical principal polarization [Defn ~\ref{canpp}]. Note that this polarization does depend on the specified curve $C$. In particular, if $A \simeq \Jac(C) \simeq \Jac(C')$ is the Jacobian of multiple different non-isomorphic curves $C \not\simeq C'$, there can be several ``canonical" principal polarizations on a Jacobian.


All other previous works known to the authors on finding multiple principal polarizations on abelian varieties have been done by finding two non-isomorphic curves with the same (unpolarized) Jacobian. Therefore, their associated canonical polarizations must be different by Torelli theorem. Otherwise, the curves would be isomorphic. All papers that we know of using this technique do so only in characteristic $p$. The papers using this technique discuss the case of \textit{non-simple} Jacobians of curves of genus two \cite{iko} and three \cite{brock}.

This technique is again used by E. Howe \cite{howe1} and \cite{howe2} which gives examples of non-isomorphic genus two curves with the same \textit{simple} Jacobian. He finds such examples in characteristic $p$ through isogeny classes of abelian varieties which correspond to special Weil numbers, an application of the Honda-Tate method. 

\section*{Acknowledgements} 
The authors would like to thank \texttt{Magma} and \texttt{Sage} contributors Edgar Costa, Nicolas Mascot, John Voight, and above all Jeroen Sijsling, who generously offered incredibly detailed and consistent help in computing the automorphism groups of Jacobians. Lee would also like to thank Matthias Weber for his guidance at the beginning of this project. This material is based upon work supported by the National Science Foundation under Grant No. DMS-1440140 while Lee was in residence at the Mathematical Sciences Research Institute in Berkeley, California, during the Fall 2019 semester. Ray is partially supported the National Science Foundation GRFP under Grant Number DGE 1842165.

\section{Objects Of Our Interest}
\label{sec: dthesis}
We use computational techniques from \cite{dthesis} to compute explicit period matrices. These techniques apply to curves that are cyclically branched over $\mathbb{C}\mathbb{P}^1.$ In this section, we summarize Chapter 3 of \cite{dthesis} and devote this section to providing background material on cyclic covers over $\mathbb{C}\mathbb{P}^1.$ We discuss the construction of such curves, various cone metrics that arise from the construction, and how to compute period matrices. 

\subsection{Construction of Cyclic Covers over $\mathbb{C}\mathbb{P}^1$}
First, we discuss the topological construction of cyclic covers over $\mathbb{C}\mathbb{P}^1.$ This will naturally yield cone metrics on the curves.

\begin{defn} We say that a curve $C$ is a $d$-fold cyclic cover over $\mathbb{C}\mathbb{P}^1$ if $C / (\Z/ d \Z) = \mathbb{C}\mathbb{P}^1.$ \end{defn}

We construct such curves with the given data: let $p_i, \ldots , p_n \in \mathbb{C}\mathbb{P}^1$ be $n$ distinct points. Let $Y := \mathbb{C}\mathbb{P}^1 \backslash \{p_1, \ldots, p_n\}$ and let $\gamma_i$ be a branch cut from $p_i$ to some $q \in Y$ so that $\gamma_i$ are mutually disjoint. For each $i,$ assign $d_i \in \{1, \ldots, d - 1\}$ and call it the \textbf{branching index at} $p_i.$ Let $d$ be the degree of the covering map and use $j$ to label $Y_1, \ldots , Y_d.$ For each $i$ and $j,$ we identify the ``left side'' of $\gamma_i$ of $Y_j$ to the ``right side'' of $\gamma_i$ of $Y_{j + d_i \pmod d}.$ We denote such a covering $C$ by a $n$-tuple $d (d_1, \ldots , d_n).$

\begin{remark} A covering $d (d_1, \ldots , d_n)$ is uniquely defined up to homeomorphism. That is, the construction only depends on $d_i$ and is independent of $p_i,$ $\gamma_i,$ and $q.$ We further assume that $\sum\limits_{i=1}^n d_i \equiv 0 \pmod d$ and $\gcd (d_1, \ldots, d_n) = 1.$ The former guarantees that the covering is closed and the latter guarantees that the covering is connected. Then one can compute the genus of the curve by Riemann-Hurwitz formula and $g(C) = \frac{d (n-2)}{2} + 1 - \frac{1}{2} \sum\limits_{i=1}^n \gcd(d,d_i).$ 
\end{remark}

\subsubsection*{Branching indices on Fermat's quartic}
As a running example, we will look at Fermat's quartic that was studied as a cyclic cover in \cite{dami}. The curve is invariant under an order-eight rotational symmetry (Figure~\ref{fig:125}). Its quotient under this rotation is $\mathbb{C}\mathbb{P}^1$ and the covering is defined as $8 (1, 2, 5).$

\begin{figure}[htbp] 
   \centering
   \includegraphics[width=2in]{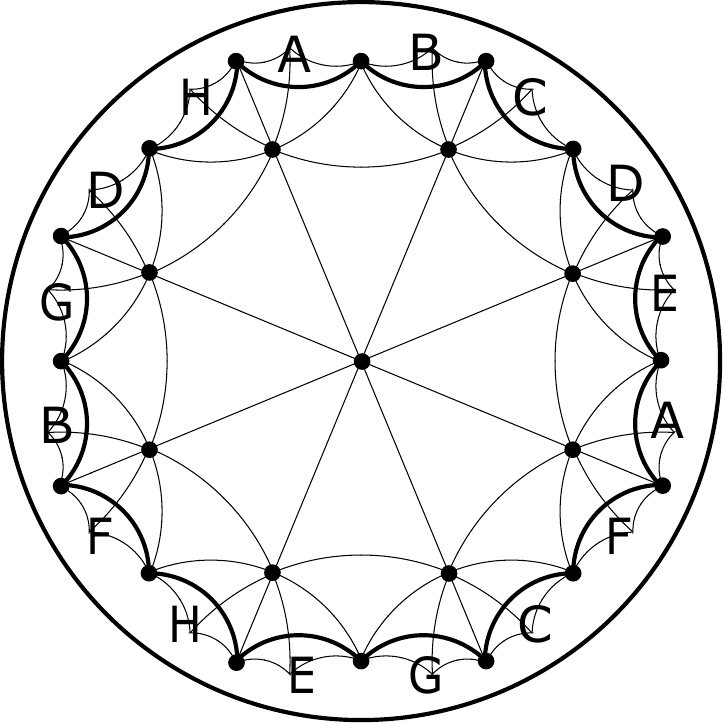} 
  \caption{Hyperbolic tessellation on $8(1, 2, 5).$ Reprinted from \cite{dami}.}
  \label{fig:125}
\end{figure}


For each $i,$ there are $\gcd(d, d_i)$ preimages $\widetilde{p_i}$ of $p_i$ on $C.$ Hence, each $\widetilde{p_i}$ is a non-trivial cone point. To pin down a holomorphic 1-form on $C,$ we find cone metrics on $\mathbb{C}\mathbb{P}^1$ and pull-back to its covering. We say that a cone metric on $\mathbb{C}\mathbb{P}^1$ is \textbf{admissible} if its pullback yields a flat structure on $C.$ The following proposition is a version of Gauss-Bonnet theorem.

\begin{proposition} Given a compact Riemann surface of genus $g$ with a cone metric, let $p_1, \ldots, p_n$ be distinguished points with respective cone angles $\theta_i.$ Then $\sum\limits_{i=1}^n \theta_i = 2 \pi (2 g - 2 + n).$
\end{proposition}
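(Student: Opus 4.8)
The plan is to invoke the Gauss--Bonnet theorem adapted to surfaces carrying cone singularities. Recall that if $S$ is a compact surface whose metric is smooth away from finitely many cone points $p_1,\dots,p_n$ with cone angles $\theta_1,\dots,\theta_n$, then the cone version of Gauss--Bonnet asserts
$$\int_S K\,dA + \sum_{i=1}^n (2\pi - \theta_i) = 2\pi\,\chi(S),$$
where $K$ is the Gaussian curvature of the smooth locus and the term $2\pi-\theta_i$ records the curvature concentrated at $p_i$ as an angle deficit. The essential structural input is that the cone metrics in question arise from the flat structure pulled back from the punctured sphere, so the smooth locus is \emph{flat}: $K\equiv 0$, and hence $\int_S K\,dA = 0$.

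Granting this, the computation is short. Substituting $\chi(S)=2-2g$ and the vanishing of the curvature integral into the displayed formula gives $\sum_{i=1}^n(2\pi-\theta_i)=2\pi(2-2g)$, that is $2\pi n - \sum_i \theta_i = 2\pi(2-2g)$. Solving for $\sum_i\theta_i$ yields $\sum_i\theta_i = 2\pi n - 2\pi(2-2g) = 2\pi(2g-2+n)$, which is exactly the claimed identity.

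The step that genuinely requires care is the cone Gauss--Bonnet formula itself, which I would establish by a direct triangulation argument rather than cite as a black box. Choose a geodesic triangulation of $S$ having all cone points among its vertices, with $V$ vertices, $E$ edges, and $F$ triangular faces. Because the smooth metric is flat, each face is a genuine Euclidean triangle, so its interior angles sum to $\pi$; summing over all faces gives total angle $\pi F$. Re-summing the same angles vertex by vertex, a cone vertex $p_i$ contributes $\theta_i$ while each of the remaining $V-n$ ordinary vertices contributes $2\pi$, so $\pi F = \sum_i\theta_i + 2\pi(V-n)$. Combining this with the incidence relation $3F=2E$ and Euler's formula $V-E+F = 2-2g$ eliminates $V,E,F$ and returns $\sum_i\theta_i = 2\pi(2g-2+n)$. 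The main subtlety to flag is the flatness hypothesis: for a general cone metric the curvature integral does not vanish and the clean formula fails, so it is crucial that these are flat cone metrics coming from the branching construction.
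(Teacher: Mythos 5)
Your proof is correct. Note that the paper itself states this proposition without proof --- it is background material imported from the first author's thesis \cite{dthesis} --- so there is no in-paper argument to compare against; what you have written is the standard argument, and it is complete. Your derivation from the cone-point Gauss--Bonnet formula is the right mechanism, and your closing triangulation argument (angle sum $\pi F$ counted by faces versus $\sum_i \theta_i + 2\pi(V-n)$ counted by vertices, combined with $3F = 2E$ and $V - E + F = 2-2g$) correctly reproves that formula in the flat case rather than citing it as a black box; the only ingredient you use without proof is the existence of a geodesic triangulation of a flat cone surface with the cone points among its vertices, which is a standard fact (e.g.\ via the Delaunay decomposition) and reasonable to assume. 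Your flag about flatness is also exactly the right reading of the statement: as literally phrased, ``cone metric'' must mean a Euclidean cone metric, since otherwise the curvature integral contributes and the identity fails; this is consistent with the paper's usage, where cone metrics arise from the flat (translation) structures pulled back along the branched cover, and it is confirmed by the paper's own sanity check, the Euclidean triangular pillowcase, whose cone angles $(2\alpha, 2\beta, 2\gamma)$ with $\alpha + \beta + \gamma = \pi$ sum to $2\pi = 2\pi(2\cdot 0 - 2 + 3)$.
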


Specifically, the sum of cone angles on a genus zero curve is $2 \pi (n - 2).$ Given branching indices such that $\sum\limits_{i=1}^n d_i = d (n - 2),$ we get a cone metric on $\mathbb{C}\mathbb{P}^1$ with cone angles $\frac{2 \pi d_i}{d}$ at each $p_i.$ For example, by putting a cone metric on the quotient sphere where the cone angles are $\frac{1 \pi}{8}, \frac{2 \pi}{8},$ and $\frac{5 \pi}{8}$ as in Figure~\ref{fig:125_flat}, one can see that the identification of edges are by translations. In other words, this gives rise to a translation structure on the eightfold cover of the sphere. Moreover, we obtain a holomorphic 1-form with one order-4 zero.

\begin{figure}[htbp]
   \centering
   \includegraphics[width=2.5in]{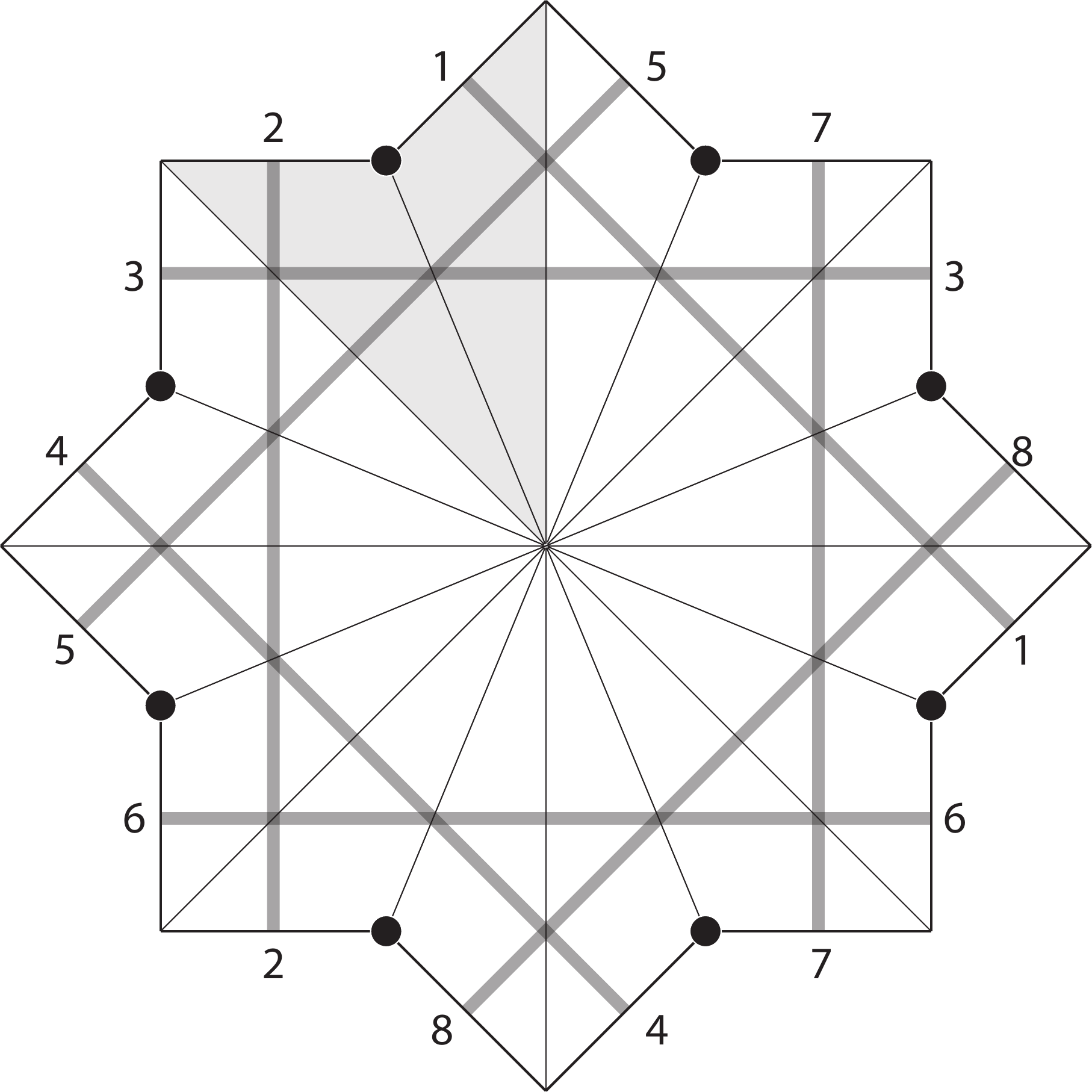} 
  \caption{A flat structure $\omega_1$ on $8(1, 2, 5)$ with one marked zero. Modified from \cite{dami}}
  \label{fig:125_flat}
\end{figure}

Our goal is to find other admissible cone metrics on the sphere that yield different translation structures (linearly independent 1-forms) on the same curve. We claim that the cone metric given by cone angles $\frac{2 \pi a_i}{d}$ where $a_i \equiv d_i \pmod d$ for all $i$ is admissible (Theorem 3.4, \cite{dthesis}). An easier way of finding admissible cone metrics is given by the following notion of multipliers.

\begin{defn} Given branching indices $d (d_1, \ldots , d_n),$ we say $a \in \{1, \ldots, d - 1\}$ is a \textbf{multiplier} if the cone metric given by cone angles $\frac{2 \pi}{d} (a \cdot d_1 \pmod d, \ldots , a \cdot d_n \pmod d)$ is admissible. 
\end{defn}

Theorem 3.5, \cite{dthesis} proves that for $n = 3,$ there are exactly $g$ multipliers. In other words, one achieves a basis of holomorphic 1-forms via multipliers. We denote the multipliers by $a_i,$ hence the admissible cone angles are given by $$\frac{2\pi}{d}(a_1 d_1, a_1 d_2, a_1 d_3), \ldots , \frac{2\pi}{d}(a_g d_1, a_g d_2, a_g d_3).$$ 

With admissible cone metrics, we can algebraically describe a curve by its plane curve model. For covers branched over three points, we choose $p_i = 0, 1, \infty$ and describe it as $y^d = x^{d_1} (x-1)^{d_2}.$ However, a plane curve model for a given curve is not uniquely defined. The allowed choices of models is discussed in Section 3.2 of \cite{dthesis}. However, our codes do not differentiate the different plane curve models.

\subsubsection*{Admissible cone metrics on $8 (1, 2, 5)$} Given branching indices $8 (1, 2, 5),$ multipliers 1, 2, and 5 give rise to cone metrics with cone angles $\frac{2 \pi}{8}(1, 2, 5),$ $\frac{2 \pi}{8}(2, 4, 2),$ and $\frac{2 \pi}{8}(5, 2, 1),$ respectively. These cone metrics yield a basis of holomorphic 1-forms with the following divisors: $$(\omega_1) = 4 \widetilde{p_3}, \qquad (\omega_2) = \widetilde{p_1} + \widetilde{p_2}_1 + \widetilde{p_2}_2 + \widetilde{p_3}, \qquad (\omega_3) = 4 \widetilde{p_1}.$$

Figure~\ref{fig:flat_rs2} represents $\omega_2$ given by cone angles $\frac{2 \pi}{8}(2, 4, 2).$ The four simple zeros are marked on the figure.

\begin{figure}[htbp]
   \centering
   \includegraphics[width=4in]{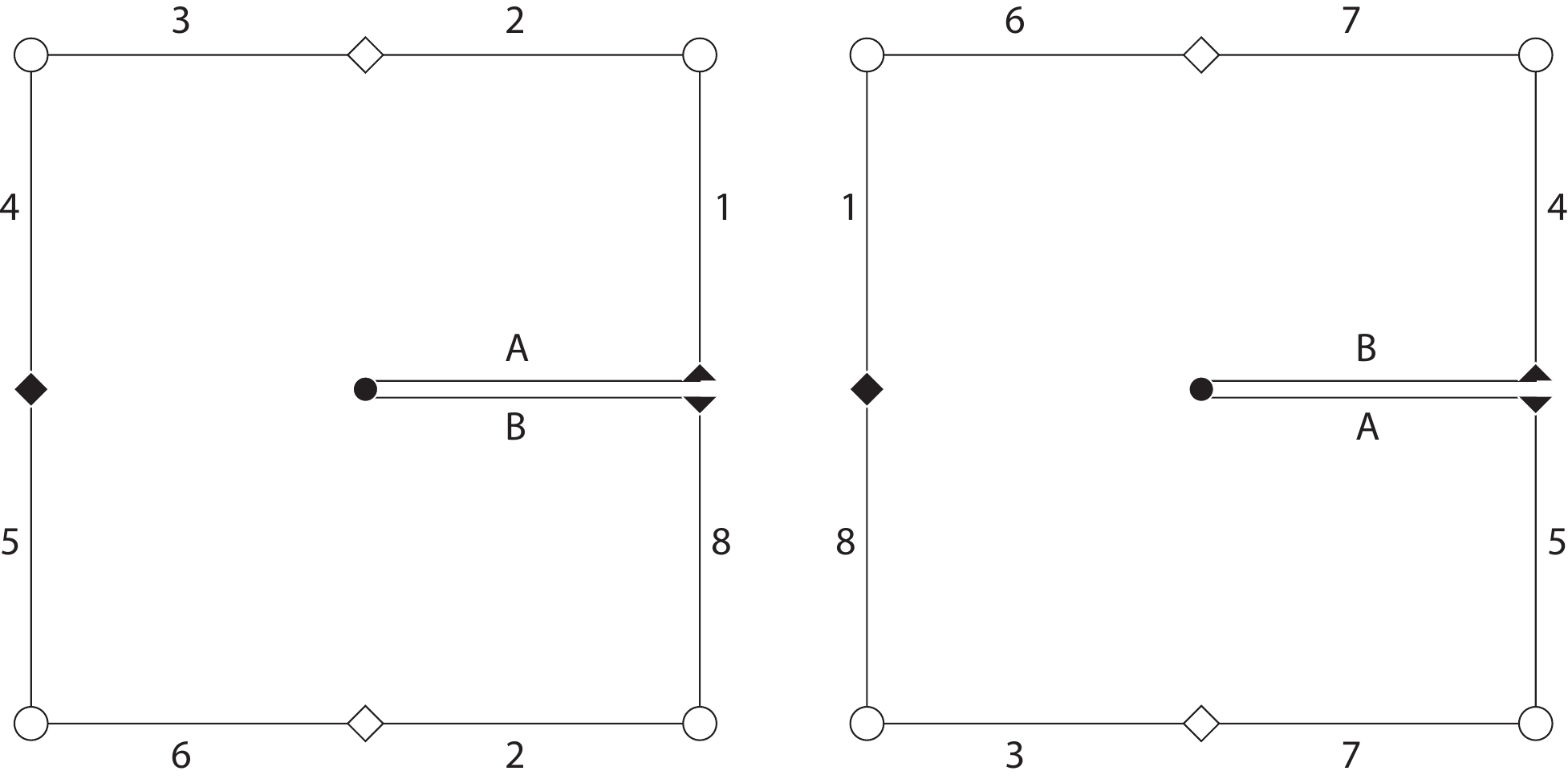} 
  \caption{A flat structure $\omega_2$ on $8(1, 2, 5)$ with four simple zeros. Modified from \cite{dthesis}}
  \label{fig:flat_rs2}
\end{figure}




\subsubsection*{Computing the period matrix of cyclic covers}
\label{sec:cyclicperiod}
In this section, we use the flat structure of a surface to compute the period matrix of a given curve. We will look at the simplest case where $n = 3$ and $d_1 = 1.$ Then, since $\sum d_i = d,$ a cone metric with cone angles $\frac{2 \pi}{d}(d_1, d_2, d_3)$ is admissible. $Y$ is topologically equivalent to a doubled triangle with angles $\frac{2 \pi}{d}(d_1, d_2, d_3)$ so we construct $C$ with $d$ copies of $Y,$ which yields a flat structure on $C.$ 

The identification of edges on Figure~\ref{fig:125_flat} are via Euclidean translations, which verifies that the cone metric is admissible. Translations yield closed cycles on the surface from which we get a homology basis with the following intersection matrix

$$\textrm{int}_1 = \begin{pmatrix} 0 & 1 & 1 & 0 & 0 & 0 \\
 -1 & 0 & 1 & 1 & 0 & 0 \\
 -1 & -1 & 0 & 1 & 1 & 0 \\
 0 & -1 & -1 & 0 & 1 & 1 \\
 0 & 0 & -1 & -1 & 0 & 1 \\
 0 & 0 & 0 & -1 & -1 & 0 \end{pmatrix}.$$

Then the period matrix is computed as follows: 
$$\Pi_1 = \left(
\begin{array}{cccccc}
 1 & e^{\frac{\pi i}{4}} & i & e^{\frac{3 \pi i}{4}} & -1 & e^{-\frac{3 \pi i}{4}} \\
 1 & i & -1 & -i & 1 & i \\
 1 & e^{-\frac{3 \pi i}{4}} & i & e^{-\frac{\pi i}{4}} & -1 & e^{\frac{\pi i}{4}} 
\end{array}
\right).$$

In general, from cyclicity we get 

$$\Pi = \left(
\begin{array}{ccccc}
 1 & e^{\frac{2 \alpha_1 \pi i}{d}} & e^{\frac{4 \alpha_1 \pi i}{d}} & \cdots & e^{\frac{2 (2 g - 1) \alpha_1 \pi i}{d}} \\
 1 & e^{\frac{2 \alpha_2 \pi i}{d}} & e^{\frac{4 \alpha_2 \pi i}{d}} & \cdots & e^{\frac{2 (2 g - 1) \alpha_2 \pi i}{d}} \\
 \vdots\\
 1 & e^{\frac{2 \alpha_g \pi i}{d}} & e^{\frac{4 \alpha_g \pi i}{d}} & \cdots & e^{\frac{2 (2 g - 1) \alpha_g \pi i}{d}} \\
\end{array}
\right)$$
where $\alpha_i$ are the multipliers.



 
\begin{remark} In Theorem 5.6 of \cite{dthesis}, Lee computes the period matrix of Fermat's quartic by choosing a symplectic homology basis. This yields the following period matrix

$$\Pi_2 = (A|B) = \begin{pmatrix}  1 - i& -\frac{1 + i}{1 + \sqrt{2}}& \frac{1 + i}{1 + \sqrt{2}}& 1 + i& \sqrt{2}& 2 - \sqrt{2} \\  -2i& 2i& 2i& 2i& -2& -2\\ -1 - i & (1 - i)(1 + \sqrt{2})& (-1 + i)(1 + \sqrt{2})& 1 - i& i\sqrt{2}& i(-2 - \sqrt{2})  \end{pmatrix} $$ and $$\tau = (A^{-1}B) = \begin{pmatrix}i & \frac{1 + i}{2} & \frac{1 + i}{2}\\
\frac{1 + i}{2} & i & \frac{1 + i}{2}\\
\frac{1 + i}{2} & \frac{1 + i}{2} & i\end{pmatrix}.$$\end{remark}

 \section{Programmatically Computing Principal Polarizations of Abelian Varieties over $\mathbb{C}$}

\label{sec:computing}

\noindent All of our code is available at \begin{center}\url{https://github.com/catherineray/aut-jac}.\end{center} 

\subsection{Introduction to Polarizations: From Theory to Code}
\label{sec:intropol}
The notion of a polarization of an abelian variety has many faces. If a complex torus has a polarization, it is an abelian variety.

\begin{defn} A \textbf{polarization} of a complex torus $X$ is an embedding $j: X \to \P^N$ for large enough $N$. \end{defn}

We can understand this embedding $j$ as a map $$p \mapsto [a_1(p) : \cdots : a_{N-1}(p)],$$ where $a_i$ are a chosen generating set of global sections of a line bundle $\mc{L}$ on $X$. 

\begin{defn}  A polarization $c_1(\mc{L})$ of $X$ is called \textbf{principal} if $\mc{L}$ has only one section up to constants, i.e. $\dim H^0(X, \mc{L}) = 1$. Here, $c_1$ denotes the first chern class. \end{defn} 

As a motivational theorem:

\begin{thm*} [\cite{bl} 4.1.2] Every polarization is induced by a principal polarization via an isogeny. \end{thm*}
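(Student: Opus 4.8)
The plan is to realize the polarization as a positive-definite Hermitian form and then replace the lattice by a commensurable one on which the imaginary part becomes unimodular. Concretely, write the polarization on $X = V/\Lambda$ as $c_1(\mc{L})$, and let $H$ be the positive-definite Hermitian form with $E := \psi(c_1(\mc{L})) = \operatorname{Im} H$, so that $E(\Lambda,\Lambda)\subseteq\Z$ and $E(iv,iw)=E(v,w)$ as in [\cite{bl} 2.1.6]; positivity of $\mc{L}$ guarantees $H$ is positive definite. The first step is to normalize $E$: by the elementary divisor (Frobenius) theorem applied to the integral alternating form $E$ on $\Lambda$, there is a symplectic $\Z$-basis $\lambda_1,\dots,\lambda_g,\mu_1,\dots,\mu_g$ of $\Lambda$ with $E(\lambda_i,\mu_j)=d_i\delta_{ij}$ and $E(\lambda_i,\lambda_j)=E(\mu_i,\mu_j)=0$, for positive integers $d_1\mid d_2\mid\cdots\mid d_g$. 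The tuple $(d_1,\dots,d_g)$ is the \emph{type}, and the polarization is principal exactly when all $d_i=1$, equivalently when $E$ is unimodular on $\Lambda$, equivalently when $\dim H^0(X,\mc{L})=\prod_i d_i=1$.

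Second, I would build the principally polarized target. Define the enlarged lattice
$$\Lambda' := \Z\lambda_1 + \cdots + \Z\lambda_g + \Z\tfrac{1}{d_1}\mu_1 + \cdots + \Z\tfrac{1}{d_g}\mu_g,$$
so $\Lambda\subseteq\Lambda'$ is a full-rank lattice in $V$ of finite index $\prod_i d_i$. Keeping the same form $H$ on $V$, a direct computation gives $E(\lambda_i,\tfrac{1}{d_j}\mu_j)=\delta_{ij}$ with all other basis pairings vanishing, so $E$ restricts to a unimodular integral alternating form on $\Lambda'$ while still satisfying $E(iv,iw)=E(v,w)$. By [\cite{bl} 2.1.6] together with positivity of $H$, the form $H$ is the first Chern class of an ample line bundle $\mc{L}'$ on $X':=V/\Lambda'$, which is therefore an abelian variety; since $E$ is now of type $(1,\dots,1)$, the polarization $c_1(\mc{L}')$ is principal.

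Third, I would produce the isogeny and verify compatibility. The inclusion $\Lambda\subseteq\Lambda'$ lets the identity on $V$ descend to a surjection $f: X\to X'$ with finite kernel $\Lambda'/\Lambda$, i.e.\ an isogeny. Under the correspondence between Chern classes and alternating forms, pullback of line bundles along the analytic representation of $f$ (which is the identity on $V$) corresponds to restricting the Hermitian form from $\Lambda'$ back to $\Lambda$. Hence $f^*\mc{L}'$ has Chern class $\psi^{-1}(E)$ on $\Lambda$, that is $f^*c_1(\mc{L}')=c_1(\mc{L})$, which is exactly the assertion that the original polarization is induced by the principal polarization $c_1(\mc{L}')$ via the isogeny $f$.

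The main obstacle is the first step: producing the symplectic normal form, for which I would invoke the elementary divisor theorem for alternating forms over $\Z$, together with the bookkeeping needed to pass between the three levels — line bundles, Chern classes in $H^2(X;\Z)$, and alternating forms in $\Alt^2(\Lambda,\Z)$ — so that the purely lattice-theoretic computation on $\Lambda'$ transports to the identity $f^*c_1(\mc{L}')=c_1(\mc{L})$. In particular I must check carefully that pullback along $f$ really is restriction of $H$ (functoriality of $\psi$ under the analytic representation, [\cite{bl} 1.2.1, 2.1.2]) and that unimodularity of $E$ on $\Lambda'$ is equivalent to $\dim H^0(X',\mc{L}')=1$, so that the target is principal in the sense of the paper's definition.
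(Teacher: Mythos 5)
Your proof is correct, and it is essentially the standard argument for [\cite{bl} 4.1.2]: the paper itself offers no proof of this statement (it is quoted as a motivational theorem from Birkenhake--Lange), and your lattice-enlargement construction --- symplectic normal form for $E$ on $\Lambda$, passage to $\Lambda' = \sum_i \Z\lambda_i + \sum_j \Z\tfrac{1}{d_j}\mu_j$ on which $E$ becomes unimodular, and the isogeny $V/\Lambda \to V/\Lambda'$ induced by the identity on $V$ --- is exactly the proof given in the cited reference. The compatibility check $f^*c_1(\mc{L}') = c_1(\mc{L})$ via the analytic representation is handled correctly, so there is nothing to add.
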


By Narasimhan-Nori \cite{nn}, there are only finitely many principal polarizations on a variety $X$ which are irreducible and smooth. As a corollary, only finitely many curves may have the same Jacobian, because each non-isomorphic curve gives a non-isomorphic principal polarization on its Jacobian.

\vspace{+5pt}
\noindent We end this introduction to polarizations with a special kind of principal polarization. 

\begin{defn} \label{canpp} The \textbf{canonical principal polarization} of an abelian variety $A \simeq \Jac(C)$, isomorphic to the Jacobian of a specified curve $C$, is a symplectic form on $\Jac(C)$ induced the intersection form $Q: H_1(C; \mathbb{Z}) \otimes H_1(C; \mathbb{Z}) \to\mathbb{Z}$. This is because $H^2(\Jac(C); \mathbb{Z}) \simeq \Hom(\bigwedge H_1(C; \mathbb{Z}); \mathbb{Z})$. \end{defn}

\subsection{Finding Principal Polarizations} 
\label{sec:find}

We begin with a representation of our abelian variety as $A := \C^g/\Pi\Z^{2g}$. Then $\Lambda$ is the associated lattice spanned by the columns of $\Pi$. Thus, we have a distinguished basis for the homology of $A$, corresponding to the columns of $\Pi$. 

\vspace{+5pt}

\textbf{Algorithm:} Compute many principal polarizations on a given abelian variety $A$.

\textit{Input:} An abelian variety $A := \C^g/\Pi\Z^{2g}$, where $\Lambda$ is the associated lattice to the period matrix $\Pi$. \n
$\text{}$ $\hspace{2mm}$\textit{Output:} Many principal polarizations on $A$.
\begin{enumerate} 

\item The \texttt{Magma} function \texttt{FindPolarizationBasis} determines all integral alternating pairings $E$ on the homology, i.e., $E\in \Alt^2(\C^g, \Z)$, for whose real extension we have:

$$E (i v, i w) = E (v, w)$$
This is a basis of alternating forms $\{E_i\}$. (For more context, see \cite{bl} 2.1.6.)
\item Check that $E$ is positive-definite.
\item Try some small combinations and sees if $E_i$ actually gives a pairing with determinant 1 indicating that $E_i$ is a principal polarization. If so, it returns $E_i$. This gives us a set $\{E_k\}$ of integral pairings on the homology.
\item For each $i$, we rewrite these pairings in a symplectic basis. That is, we find a basis of $\Lambda$ in which $$E_i = \begin{pmatrix} 0 & D \\ -D & 0 \end{pmatrix}$$ where $D = \text{diag}(d_1, ..., d_g)$ which we may do by the elementary divisor theorem (Section 3.1, \cite{bl}). 

\end{enumerate} 

This does nothing but modify the (homology) basis of $\Lambda$. Multiplying $\Pi$ on the right with this integral matrix, we get a new period matrix $Q$ whose columns span exactly the same lattice but for which the standard symplectic pairing $E$ is actually the Chern class of a line bundle. This is often called the Frobenius form of the period matrix $\Pi$.

\section{Studying Principal Polarizations via Automorphisms of the Jacobian}

The algorithm we use for calculating the symplectic automorphism groups of curves comes from the psuedocode of Section 4 of \cite{numerical}. 

\begin{defn} Let $a$ be a polarization of $X$. We call $\Aut(X, a)$ a \textbf{symplectic automorphism group} of $X$, as it respects the symplectic form $a$. 
\end{defn}

\begin{defn} We say two principal polarizations $a_1$ and $a_2$ on $A$ are \textbf{auto-equivalent} if and only if $\Aut(A, a_1) \simeq \Aut(A, a_2)$. \end{defn}

Our program produces many auto-equivalent principal polarizations.  Note that auto-equivalence is a weaker notion of equivalence than analytic equivalence, as we will show in Section ~\ref{sec:questions}. 

If the abelian variety is indeed a Jacobian, this method will in practice return at least enough polarizations to find a canonical principal polarization. It is an unsolved problem to find \textit{all} possible principal polarizations associated to a given abelian variety, called ``explicit Narasimhan-Nori."

\subsection{Certifying Heuristic Methods}
\label{sec:cert}

We must certify that numerically computed endomorphisms of a Jacobian are in fact endomorphisms of that Jacobian. If the entries of the period matrices are exact and algebraic over $\Q$, this extra certification is unnecessary. We need only check that putative endomorphisms are correct via a simple linear-algebraic verification. This amounts to given a period matrix $\Pi \in M_{g \times 2g}(\overline{\Q})$ associated to an abelian variety $C^g/\Lambda$, find
$$M \Pi = \Pi R,$$
where $R \in  M_{2g}(\overline{\Q})$ and $M \in M_{g}(\overline{\Q})$. We then know that $R$ is an endomorphism of $\Lambda$, and $M$ is a representation of the endomorphism of the abelian variety on $\C^g$, the tangent space around the origin.

Fortunately, the period matrices associated to cyclically branched covers of $\C\P^1$ via the method described in Section ~\ref{sec:cyclicperiod} are exact. 

\subsection{Examples}
\label{sec:examples}

All curves on which we apply our method are cyclic covers over $\C\P^1.$ Here we describe our examples in terms of their branching indices over $\C\P^1.$ The computation of the period matrices can be found in \url{https://github.com/catherineray/autjac-paper/blob/master/periodmatrices.pdf}. The general version of the period matrix is shown in Section~\ref{sec:cyclicperiod}.

\begin{table}[!hbt]
\caption{Automorphism Groups wrt each of the Principal Polarizations}
\centering
\begin{adjustbox}{ tabular= l | c c r r r, center} \hline
  \shortstack{Curve C} & Genus & $\pi(A)$ & $\Aut(\Jac(C), a_i)$ & $|\Aut(\Jac, a_i)|$ & GAPID \\ \hline\hline
  Klein's quartic & 3 & 2 & $S_4 \times C_2$ & 48 & [48, 48]\\ 
  & & & $GL_3(F_2) \times C_2$ & 336 & [336, 209] \\  \hline 
Fermat's quartic & 3 & 2 &  $(C_4\wr C_2) \times C_2$ & 64 & [64, 101] \\ 
& & & $(C_4^{\text{ }2} \rtimes S_3) \times C_2$ & 192 & [192, 944] \\  \hline
$12(1, 5, 6)$ & 3 & 3 & $D_6$  & 12 & [12, 4]\\ 
&  & & $C_4 \times S_3$ & 24 & [24, 5] \\
& & & $C_4 \times D_4$ & 32  & [32, 25] \\ \hline 
Bring's curve & 4 & 2 & $C_2^{\text{ }2} \times D_4$ & 32 & [32, 46] \\
& & & $C_2 \times S_5$ & 240 & [240, 189] \\ \hline 
I-WP & 4 & 9 & $C_2^{\text{ }4}$ & 16 & [16, 14] \\
 
& & & $C_2^{\text{ } 2} \times C_6$  & 24  & [24, 15] \\
 
& & & $C_2^{\text{ } 2} \times D_4$  & 32 & [32, 46] \\
 
& &   & $C_2^{\text{ }3} \times C_6$ & 48 &  [48, 52] \\
 

& & & $C_2^{\text{ }2}\times S_4$ & 96 & [96, 226] \\

& & & $C_6 \times S_4$ & 144 & [144, 188] \\

& & & $(C_2 \times C_6) \times (C_3 \rtimes D_4)$  & 288 & [288, 1002]  \\

& & & $C_3 \times (((C_6 \times C_2) : C_2) \times D_8)$ & 576 & [576, 7780] \\ 

& & & $C_6 \times (S_3 \times ((C_6 \times C_2) : C_2))$ & 864 & [864, 4523] \\ \hline
\end{adjustbox}
\label{table:tablelabel}
\end{table}

\begin{remark} Note that the University of Bristol's GroupNames database at the time of writing has groups up to order 500 with full names and structure description. In the cases where the order is greater than 500, we use the output of \texttt{StructureDescription(G);} \end{remark}

Table ~\ref{table:plane} is the result of running our program based on ~\cite{numerical}, which is available at \begin{center}\url{https://github.com/catherineray/aut-jac/autplane.sage}\end{center} We use the results in this table together with the precise Torelli theorem (appendix of \cite{Torelli}) to understand the automorphism group of $\Jac(C)$ with respect to the principal polarization induced by the curve $C$, which we apply in Section ~\ref{sec:questions}. 


\begin{table}[H]
\caption{Plane Curve Automorphism Groups}
\centering 
\begin{tabular}{ l | l c r r c} \hline
  \shortstack{Curve C} & Plane Curve Model & Genus & Aut(C) & $|$Aut(C)$|$ \\ \hline
  $7(1, 2, 4)$ (Klein's quartic) & $y^7 - x^2(x-1)^4$ & 3 & $GL_3(F_2)$ & 168 \\  
  $8(1, 2, 5)$ (Fermat's quartic) & $y^8 - x(x-1)^2$ & 3 & $C_4^{\text{ }2} \rtimes S_3$ & 96 \\
  $\ast 12(1, 5, 6)$ &  $y^{12} - x(x-1)^5$ & 3 & $C_4 \times S_3$ & 24 \\
  $5(1, 2, 4, 3)$ (Bring's curve) & $y^5 - x (x - 1)^2 (x + 1)^3$ & 4 & $S_5$ & 120 \\ 
  $12(1, 4, 7)$ (I-WP) & $y^{12} - x(x-1)^4$ & 4 & $C_3 \times S_4$ & 72 \\ \hline
\end{tabular}
\label{table:plane} 
\caption*{An $\ast$ indicates that the curve is hyperelliptic}
\centering
\end{table}

\subsection{Questions and Answers on Abelian Varieties with Multiple Principal Polarizations}
\label{sec:questions}

We speak here of polarizations up to auto-equivalence and ask natural questions on Jacobians with multiple principal polarizations, answering all but one of the questions using methods developed in our paper.

We fix some notation. We call $\Aut(A, a_i)$ a symplectic automorphism group of $A$, as the automorphisms respect the principal polarization $a_i$, which is a symplectic form on $A$. Let $\theta_C$ be the canonical principal polarization of $\Jac(C)$ with respect to $C$.

\begin{question} $\Aut(\Jac(C), \theta_C)$ has the highest order of all symplectic automorphism groups of $\Jac(C)$. \end{question}

\begin{answer} This is proven false by $12(1,5,6)$, where $|\Aut(\Jac(12(1, 5, 6)), \theta_{12(1, 5, 6)})| = 24$, but $|\Aut(\Jac(12(1, 5, 6)), a_i)| = 32$ is achieved. It is more dramatically proven false by I-WP, where $|\Aut(\Jac(\text{I-WP}), \theta_{\text{I-WP}})| = 288$, but $|\Aut(\Jac(\text{I-WP}), a_i)|$ achieves $576$ and $864$. \end{answer}

\begin{question} Principal polarizations $a_1$ and $a_2$ are auto-equivalent if and only if they are analytically equivalent. In other words, $$\Aut(X, a_1) \simeq \Aut(X, a_2) \Leftrightarrow a_1 = a_2.$$ \end{question}

\begin{answer} The direction ($\Leftarrow$) is true because $\mc{L}$ and $\mc{M}$ are analytically equivalent if and only if $c_1(\mc{L}) = c_1(\mc{M})$ by [\cite{bl} 2.5.3]. The other direction ($\Rightarrow$) is false.  This is proven false by applying our method to the following two \textit{non-isomorphic} curves with the same \textit{unpolarized} Jacobian from Theorem 1 of \cite{howe1}:
\vspace{-2pt}
$$X: 3y^2 = (2x^2- 2)(16x^4 + 28x^2 + 1)$$ 
\vspace{-15pt}
$$X': -y^2 = (2x^2 + 2)(16x^4 + 12x^2 + 1)$$ 

\noindent which both have $\Aut(\Jac(X), \theta_X)\simeq C_2 \times C_2 \simeq \Aut(\Jac(X'), \theta_{X'})$. \end{answer}

The counterexample to the above question arises by giving an example of a pair non-isomorphic curves $X \nsimeq X'$ with the following two properties: $$\Jac(X) \simeq \Jac(X') \text{  and  }  \Aut(\Jac(X), \theta_X) \simeq \Aut(\Jac(X'), \theta_{X'}).$$ It is natural to ask if this phenomenon occurs for \textit{all} pairs of curves with isomorphic unpolarized Jacobians. 

\begin{question} Let $C$ and $C'$ be any curves such that $\Jac(C) \simeq \Jac(C')$ as complex varieties, then $$\Aut(\Jac(C), \theta_C) \simeq \Aut(\Jac(C'), \theta_{C'}).$$  \end{question} 

We checked this question on the family of hyperelliptic cases of genus 2 from \cite{howe1} Theorem 1, where it is true. However, there is no reason to expect this to be true in general. Yet, we cannot disprove it easily.

\end{document}